\newtheorem{definition}{Definition}
\newtheorem{proposition}[definition]{Proposition}
\newtheorem{theorem}[definition]{Theorem}
\newtheorem{corollary}[definition]{Corollary}
\newtheorem{remark}{Remark}
\newtheorem{example}{Example}
\def\sideremark#1{\ifvmode\leavevmode\fi\vadjust{\vbox to0pt{\vss 
      \hbox to 0pt{\hskip\hsize\hskip1em           
 \vbox{\hsize2cm\tiny\raggedright\pretolerance10000
 \noindent #1\hfill}\hss}\vbox to8pt{\vfil}\vss}}}%
\def\meas{\operatorname{vol}}
\begin{document}

\title[Minkowski content, ambient space]{Invariance of the normalized Minkowski content with respect to the ambient space}
\address{University of Zagreb, Department of Applied Mathematics, Faculty of Electrical Engineering and Computing, Unska 3, 10000 Zagreb, Croatia} 
\email{maja.resman@fer.hr}
\author{Maja Resman}

\begin{abstract}
It is easy to show that the lower and the upper box dimensions of a bounded set in Euclidean space are invariant with respect to the ambient space. In this article we show that the Minkowski content of a Minkowski measurable set is also invariant with respect to the ambient space when normalized by an
 appropriate constant. In other words, the value of the normalized Minkowski content of a bounded, Minkowski measurable set is intrinsic to the set.
\end{abstract}

\maketitle
\smallskip
\noindent Keywords: box dimension, Minkowski content, normalized Minkowski content, Minkowski measurability\\
MSC 2010: 37C45,\ 28A75

\section{Introduction}\label{sec1}

In the last century, there has been a growing interest for computing fractal dimensions of various sets. The notion of fractal dimension goes back to H.\ Minkowski (Minkowski dimension), H. Hausdorff (Hausdorff dimension) and G. Bouligand (Bouligand dimension). In dynamical systems, fractal dimensions of invariant sets were used to measure the complexity of systems. For a short overview of applications of fractal dimensions in dynamics see e.g. \cite{enc}. 

We deal here with box dimensions and Minkowski contents of sets. Among other applications, it has been noted that the box dimension and the Minkowski content of spiral trajectories or of trajectories of Poincar\' e maps around limit periodic sets show their cyclicity in perturbations, see e.g. \cite{marezu} and \cite{zuzu}. Also, box dimension and Minkowski content of a set are closely related to the domain of analyticity of the associated distance zeta function, see e.g. \cite{larazu}.

A notion closely related to Minkowski content is Minkowski measurability. We cite some articles dealing with Minkowski measurability, for example \cite{falcart}, \cite{lappom} and \cite{saunders}. In their study of the Weyl-Berry conjecture, Lapidus and Pomerance in \cite{lappom} characterized compact sets on the real line which are Minkowski measurable. They provided a method for constructing Minkowski measurable sets on the real line of any box dimension $d\in(0,1)$. They further showed that the Cantor set is not Minkowski measurable on the real line.  

Bilipschitz transformations preserve the box dimension of a set, see e.g. \cite{falconer}. On the other hand, not a lot about preserving Minkowski measurability is known. Bilipschitz mappings in general do not preserve Minkowski measurability of sets, see \cite{saunders}. Sufficiently general conditions imposed on mappings which ensure that Minkowski measurability is preserved have not yet been proposed. 

In this article, we pose the following questions: 
\emph{
\begin{enumerate}
\item[(i)] Is Minkowski measurability of a set in Euclidean space preserved when the set is embedded in a higher-dimensional Euclidean space?
\item[(ii)] Is Minkowski content of a set invariant with respect to the ambient Euclidean space in which we consider the set?
\end{enumerate}}\noindent Our results are stated in Theorem~\ref{main} and Theorem~\ref{NMC} in Section~\ref{sec3}.
\bigskip

Let us introduce the main notions. Let $U\subset \mathbb{R}^N$ be a bounded set. For $\varepsilon>0$, by $U_\varepsilon^N$ we denote its $\varepsilon$-neighborhood in $\mathbb{R}^N$:
$$
U_\varepsilon^N=\{x\in\mathbb{R^N}:d_N(x,A)\leq\varepsilon\},
$$
where $d_N$ denotes the Euclidean metric in $\mathbb{R}^N$. 

\medskip
For the following definitions, see e.g. \cite{tricot}. Let $\meas_N{(U_\varepsilon^N)}$ denote the Lebesgue measure of $U_\varepsilon^N$ in $\mathbb{R}^N$.

The \emph{lower $s$-dimensional Minkowski content} of a bounded set $U\subset\mathbb{R}^N$, $s\in[0,N]$, is defined as the limit
\begin{equation}\label{mc}
\mathcal{M}^s_*(U,\mathbb{R}^N)=\liminf_{\varepsilon\to 0}\frac{\meas_N{(U_\varepsilon^N)}}{\varepsilon^{N-s}}\in[0,\infty].
\end{equation}
Similarly, we define \emph{the upper $s$-dimensional Minkowski content} $\mathcal{M}^{*s}(U,\mathbb{R}^N)$, using $\limsup$ instead of $\liminf$. If the upper and the lower Minkowski contents agree, then the $s$-\emph{dimensional Minkowski content of U}, denoted by $\mathcal{M}^s(U,\mathbb{R}^N)$, is defined to be this common value.

Furthermore, \emph{the lower and upper box dimension} of the set $U\subset\mathbb{R}^N$ are defined respectively as
\begin{align*}
\underline{\dim}_B(U,\mathbb{R}^N)=&\sup\{s\geq0:\mathcal{M}_{*}^s(U,\mathbb{R}^N)=\infty\}\\
=&\inf\{s\geq0:\mathcal{M}_{*}^s(U,\mathbb{R}^N)=0\},\\
\overline{\dim}_B(U,\mathbb{R}^N)=&\sup\{s\geq0:\mathcal{M}^{*s}(U,\mathbb{R}^N)=\infty\}\\
=&\inf\{s\geq 0:\mathcal{M}^{*s}(U,\mathbb{R}^N)=0\}.
\end{align*}
They can be described as the moments of jump of the Minkowski contents $\mathcal{M}_*^s$ and $\mathcal{M}^{*s}$ respectively from $\infty$ to $0$, as $s$ grows in $[0,N]$. In the literature, the upper box dimension is sometimes called the \emph{limit capacity}, see \cite{PT}.

If $\underline{\dim}_B(U,\mathbb{R}^N)=\overline{\dim}_B(U,\mathbb{R}^N)$, we define \emph{the box dimension of the set $U\subset\mathbb{R}^N$} as the common value:
$$
\dim_B(U,\mathbb{R}^N)=\underline{\dim}_{B}(U,\mathbb{R}^N)=\overline{\dim}_{B}(U,\mathbb{R}^N).
$$

\medskip

Let us suppose now that the set $U\subset{\mathbb{R}^N}$ has box dimension $d=\dim_B(U,\mathbb{R}^N)$. If the upper and the lower $d$-dimensional Minkowski contents of $U\subset\mathbb{R}^N$ are both in $(0,\infty)$, we say that the set $U$ is \emph{Minkowski nondegenerate} in $\mathbb{R}^N$. If moreover both Minkowski contents agree, that is, if $\mathcal{M}^{*d}(U,\mathbb{R}^N)=\mathcal{M}_*^d(U,\mathbb{R}^N)\in(0,\infty)$, we say that the set $U$ is \emph{Minkowski measurable} in $\mathbb{R}^N$. For simplicity, in this case, the $d$-dimensional Minkowski content is called the Minkowski content and denoted simply by $\mathcal{M}(U,\mathbb{R}^N)$.

\section{Motivation}\label{sec2}
We state the result about the invariance of box dimension with respect to the ambient space in which we consider the set. That is, the box dimension of a set is an intrinsic property of the set. We were not able to find this result explicitely stated in the literature.
\begin{proposition}\label{invdim}
Let $U\subset\mathbb{R}^N\hookrightarrow\mathbb{R}^{N+1}$ be a bounded set. Then it holds that
\begin{align*}
\underline{\dim}_B(U,\mathbb{R}^N)&=\underline{\dim}_B(U,\mathbb{R}^{N+1}),\\
\overline{\dim}_B(U,\mathbb{R}^N)&=\overline{\dim}_B(U,\mathbb{R}^{N+1}),\\
\end{align*}
where $\dim_B(U,\mathbb{R}^N)$ denotes the box dimension of $U$ when regarded as a subset of $\mathbb{R}^N$.
\end{proposition}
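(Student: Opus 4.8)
The plan is to relate the $\varepsilon$-neighborhoods of $U$ in the two ambient spaces $\mathbb{R}^N$ and $\mathbb{R}^{N+1}$ by a direct geometric computation, and then read off the dimension statements from the definitions. Identifying $\mathbb{R}^N$ with $\mathbb{R}^N\times\{0\}\subset\mathbb{R}^{N+1}$, I would write a point of $\mathbb{R}^{N+1}$ as $(x,t)$ with $x\in\mathbb{R}^N$ and $t\in\mathbb{R}$. The key observation is the slicing identity: for fixed $t$ with $|t|\le\varepsilon$, the slice $\{x\in\mathbb{R}^N : (x,t)\in U_\varepsilon^{N+1}\}$ is exactly $U_{\sqrt{\varepsilon^2-t^2}}^{N}$, since $d_{N+1}((x,t),U) = \sqrt{d_N(x,U)^2 + t^2}$ when $U\subset\mathbb{R}^N\times\{0\}$. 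By Fubini's theorem this gives the exact formula
\begin{equation*}
\meas_{N+1}(U_\varepsilon^{N+1}) = \int_{-\varepsilon}^{\varepsilon}\meas_N\!\left(U_{\sqrt{\varepsilon^2-t^2}}^{N}\right)dt,
\end{equation*}
which after the substitution $t=\varepsilon\sin\theta$ (or $u = \sqrt{\varepsilon^2-t^2}$) becomes a weighted average of the quantities $\meas_N(U_u^N)$ for $u\in(0,\varepsilon]$.

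Next I would insert this into the defining quotient $\meas_{N+1}(U_\varepsilon^{N+1})/\varepsilon^{N+1-s}$ and compare it with $\meas_N(U_\varepsilon^N)/\varepsilon^{N-s}$. The crude two-sided bounds are the quickest route: on one hand $\meas_N(U_u^N)$ is nondecreasing in $u$, so $\meas_N(U_u^N)\le\meas_N(U_\varepsilon^N)$ for $u\le\varepsilon$, giving
\begin{equation*}
\meas_{N+1}(U_\varepsilon^{N+1}) \le 2\varepsilon\,\meas_N(U_\varepsilon^N);
\end{equation*}
on the other hand, restricting the integral to $|t|\le\varepsilon/2$ forces $\sqrt{\varepsilon^2-t^2}\ge\varepsilon\sqrt{3}/2$, so $\meas_{N+1}(U_\varepsilon^{N+1})\ge\varepsilon\,\meas_N(U_{\varepsilon\sqrt 3/2}^N)$. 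Dividing through by the appropriate powers of $\varepsilon$, one sees that $\meas_{N+1}(U_\varepsilon^{N+1})/\varepsilon^{N+1-s}$ is sandwiched between constant multiples of $\meas_N(U_{c\varepsilon}^N)/(c\varepsilon)^{N-s}$ for fixed constants $c$ (using that $(c\varepsilon)^{N-s}$ and $\varepsilon^{N-s}$ differ only by the bounded factor $c^{N-s}$). Consequently $\mathcal{M}_*^s(U,\mathbb{R}^{N+1})$ is finite/positive/zero exactly when $\mathcal{M}_*^s(U,\mathbb{R}^N)$ is, and likewise for the upper content; the "jump from $\infty$ to $0$" characterizations of $\underline{\dim}_B$ and $\overline{\dim}_B$ then give the claimed equalities. (One should note that $s$ here ranges over $[0,N]$ on the left and $[0,N+1]$ on the right, but since both box dimensions are automatically at most $N$ — the neighborhood volumes are bounded — this causes no trouble.)

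The main obstacle, such as it is, is purely bookkeeping: making sure the constants $c$ that appear when passing from $\meas_N(U_{c\varepsilon}^N)$ back to $\meas_N(U_\varepsilon^N)$ are handled correctly under $\liminf$ and $\limsup$, i.e. verifying that replacing $\varepsilon$ by $c\varepsilon$ in the definition does not change whether the lower/upper content is $0$, positive, or $\infty$. This is immediate because as $\varepsilon\to 0^+$ so does $c\varepsilon$, and the ratio of $\varepsilon^{N-s}$ to $(c\varepsilon)^{N-s}$ is the fixed constant $c^{s-N}$; so $\liminf$ and $\limsup$ are each multiplied by a positive constant and their being $0$, finite, or $\infty$ is preserved. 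The iteration from $\mathbb{R}^N\hookrightarrow\mathbb{R}^{N+1}$ to arbitrary codimension, if desired, then follows by induction, though the statement as given only asks for the single step.
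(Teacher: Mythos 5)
Your proof is correct, but it takes a different route from the paper's. The paper disposes of this proposition in one line: the embedding $f(\mathbf{x})=(\mathbf{x},0)$ is bilipschitz from $U$ onto $U\times\{0\}$, and upper and lower box dimensions are invariant under bilipschitz maps (citing Falconer, Section 3.2). You instead prove everything from scratch via the slicing identity $\meas_{N+1}(U_\varepsilon^{N+1})=\int_{-\varepsilon}^{\varepsilon}\meas_N(U_{\sqrt{\varepsilon^2-t^2}}^N)\,dt$ and the crude two-sided bounds $\varepsilon\,\meas_N(U_{\varepsilon\sqrt3/2}^N)\le\meas_{N+1}(U_\varepsilon^{N+1})\le 2\varepsilon\,\meas_N(U_\varepsilon^N)$, which after dividing by $\varepsilon^{N+1-s}$ sandwich the Minkowski contents in $\mathbb{R}^{N+1}$ between positive constant multiples of those in $\mathbb{R}^N$; the dimension equalities then follow from the jump characterization. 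All the steps check out, including the reparametrization $\varepsilon\mapsto c\varepsilon$ under $\liminf$/$\limsup$ and the observation that the range $s\in(N,N+1]$ causes no trouble. What is notable is that your slicing identity is exactly Proposition~\ref{enhood} of the paper, and your two-sided content bounds are a variant of inequality \eqref{ineq} (which the paper obtains from a cited product theorem); the paper reserves this machinery for the sharper Theorem~\ref{main}, where the constants must be computed exactly, and uses the cheap bilipschitz argument for the dimension statement. Your approach costs more work here but is self-contained and already delivers quantitative content estimates beyond what the proposition asks for; the paper's is shorter but leans on an external reference.
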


\begin{proof} Let $f:\mathbb{R}^N\to\mathbb{R}^{N+1}$, $f(\mathbf{x})=(\mathbf{x},0)$. Then $f$ is obviously a bilipschitz mapping from $U\subset\mathbb{R}^N$ onto $U=U\times\{0\} \subset\mathbb{R}^{N+1}$. By \cite{falconer}, Section 3.2, the upper and the lower box dimensions are preserved under bilipschitz mappings. 
\end{proof}
\smallskip
By Proposition \ref{invdim}, we can denote the lower and the upper box dimensions of the set $U\subset\mathbb{R}^N$ by $\underline{\dim}_B(U)$ and $\overline{\dim}_B(U)$ respectively, without mentioning the ambient space where we consider the set.
\smallskip

Our goal is to obtain a similar result for the Minkowski content of a set $U$. We cannot proceed as in the above proof, since the Minkowski content is not invariant under the bilipschitz mappings. Moreover, the property of Minkowski measurability is not preserved even by $C^1$ bilipschitz mappings, see \cite{saunders}.

A bounded set $U\subset\mathbb{R}^N$ can be embedded in $\mathbb{R}^{N+1}$ as the Cartesian product $U\times\{0\}$. For the Minkowski content of the Cartesian product, the following estimates hold, see \cite[Theorem 3.3.6]{krantz}:
\begin{proposition}\label{kr}
If $A\subset\mathbb{R}^M$ and $B\subset\mathbb{R}^N$, then the following result for the Minkowski contents of the Cartesian product $A\times B\subset \mathbb{R}^M\times\mathbb{R}^N$ holds:
\begin{align*}
\sqrt{2}&^{-(M+N-s-r)/2}\cdot\mathcal{M}_*^s(A,\mathbb{R}^M)\cdot \mathcal{M}_*^r(B,\mathbb{R}^N)\leq\\
&\qquad  \leq\mathcal{M}_*^{r+s}(A\times B,\mathbb{R}^{N+M})\leq \mathcal{M}^{*(r+s)}(A\times B,\mathbb{R}^{N+M})\leq\\
&\hspace{6.5cm} \leq\mathcal{M}^{*s}(A,\mathbb{R}^M)\cdot \mathcal{M}^{*r}(B,\mathbb{R}^N),
\end{align*}
where $0\leq s\leq M$, $0\leq r\leq N$.
\end{proposition}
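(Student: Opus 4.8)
The plan is to reduce everything to one elementary fact about distances to a Cartesian product. First I would record the identity
$$
d_{M+N}\big((x,y),A\times B\big)^2=d_M(x,A)^2+d_N(y,B)^2 ,\qquad (x,y)\in\mathbb{R}^M\times\mathbb{R}^N ,
$$
which is immediate from $|x-a|^2+|y-b|^2\ge d_M(x,A)^2+d_N(y,B)^2$ for every $(a,b)\in A\times B$, together with an $\varepsilon$-$\delta$ choice of near-optimal $a$ and $b$ for the reverse inequality (equivalently, after replacing $A$ and $B$ by their closures, which leaves every neighborhood unchanged, the nearest point of $A\times B$ splits coordinatewise). Hence the tube $(A\times B)_\varepsilon^{M+N}=\{(x,y):d_M(x,A)^2+d_N(y,B)^2\le\varepsilon^2\}$ is squeezed between two product tubes,
$$
A_{\varepsilon/\sqrt2}^M\times B_{\varepsilon/\sqrt2}^N\ \subseteq\ (A\times B)_\varepsilon^{M+N}\ \subseteq\ A_\varepsilon^M\times B_\varepsilon^N ,
$$
the right inclusion because each of the two squared distances is then at most $\varepsilon^2$, the left because $(\varepsilon/\sqrt2)^2+(\varepsilon/\sqrt2)^2=\varepsilon^2$.

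Next I would push both inclusions through $\meas_{M+N}$, using that Lebesgue measure on $\mathbb{R}^M\times\mathbb{R}^N$ is the product of the Lebesgue measures, so that the measure of a measurable rectangle factors. The right inclusion gives $\meas_{M+N}\big((A\times B)_\varepsilon^{M+N}\big)\le\meas_M(A_\varepsilon^M)\,\meas_N(B_\varepsilon^N)$; dividing by $\varepsilon^{(M+N)-(s+r)}=\varepsilon^{M-s}\,\varepsilon^{N-r}$ and taking $\limsup_{\varepsilon\to0}$, the inequality $\limsup(fg)\le(\limsup f)(\limsup g)$ for nonnegative $f,g$ yields the upper estimate $\mathcal{M}^{*(r+s)}(A\times B,\mathbb{R}^{M+N})\le\mathcal{M}^{*s}(A,\mathbb{R}^M)\,\mathcal{M}^{*r}(B,\mathbb{R}^N)$. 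The left inclusion gives $\meas_{M+N}\big((A\times B)_\varepsilon^{M+N}\big)\ge\meas_M\big(A_{\varepsilon/\sqrt2}^M\big)\,\meas_N\big(B_{\varepsilon/\sqrt2}^N\big)$; writing $\delta=\varepsilon/\sqrt2$, dividing by $\varepsilon^{(M+N)-(s+r)}=(\sqrt2)^{(M+N)-(s+r)}\,\delta^{M-s}\,\delta^{N-r}$, pulling the constant power of $\sqrt2$ to the front, and taking $\liminf$, the companion inequality $\liminf(fg)\ge(\liminf f)(\liminf g)$ for nonnegative $f,g$ yields the lower estimate, with the constant being a fixed power of $\sqrt2$ times $\mathcal{M}_*^s(A,\mathbb{R}^M)\,\mathcal{M}_*^r(B,\mathbb{R}^N)$. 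The middle inequality $\mathcal{M}_*^{r+s}\le\mathcal{M}^{*(r+s)}$ is just $\liminf\le\limsup$.

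The hard part is bookkeeping rather than ideas. Since the four one-sided contents live in $[0,\infty]$, the products that appear can be of the indeterminate form $0\cdot\infty$; the statement and the above manipulations are cleanest when these contents are finite (which is the situation in the intended application, where one factor is a single point), and in the degenerate corners one argues directly from the inclusions. I would therefore prove the $\limsup$/$\liminf$ product inequalities in the robust form ``for every $\eta>0$, eventually $f\le\limsup f+\eta$'' (and its lower analogue), so the finite cases come out automatically, and I would state the passage to closed $A,B$ explicitly before invoking the ``coordinatewise nearest point''. Finally, the exact value of the constant is sensitive to how tightly one chooses the left-hand inclusion: the crude rectangle $A_{\varepsilon/\sqrt2}^M\times B_{\varepsilon/\sqrt2}^N$ already fixes it as a power of $\sqrt2$, but one can do slightly better by slicing the elliptical tube in the $\mathbb{R}^M$-direction and integrating $\meas_N\big(B_{\sqrt{\varepsilon^2-d_M(x,A)^2}}^N\big)$ over $x\in A_\varepsilon^M$ via Fubini before optimizing over the slicing radius.
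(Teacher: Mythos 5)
Your argument is correct in substance, and it is more than the paper itself supplies: the paper gives no proof of Proposition~\ref{kr}, only the citation \cite[Theorem 3.3.6]{krantz}. The Pythagorean splitting $d_{M+N}((x,y),A\times B)^2=d_M(x,A)^2+d_N(y,B)^2$, the two rectangle inclusions $A^M_{\varepsilon/\sqrt{2}}\times B^N_{\varepsilon/\sqrt{2}}\subseteq (A\times B)^{M+N}_{\varepsilon}\subseteq A^M_{\varepsilon}\times B^N_{\varepsilon}$, and the product inequalities for $\liminf$ and $\limsup$ are the standard route, and your decision to run the $\limsup$/$\liminf$ estimates in ``$\eta$-form'' and to treat the $0\cdot\infty$ corners separately is the right precaution, since $\limsup(fg)\le\limsup f\cdot\limsup g$ genuinely fails when one factor tends to $0$ and the other to $\infty$.

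The one point you must address explicitly is the constant. Your inclusion yields $(\sqrt{2})^{-(M+N-s-r)}=2^{-(M+N-s-r)/2}$, whereas the statement displays $\sqrt{2}^{\,-(M+N-s-r)/2}=2^{-(M+N-s-r)/4}$, so as written your proof does not reach the printed constant. This is not a defect of your argument but a misprint in the statement: the exponent $2^{-(M+N-s-r)/2}$ is the one consistent with the paper's own specialization \eqref{ineq} (taking $B=\{0\}\subset\mathbb{R}$, $r=0$, $\mathcal{M}^0_*(\{0\},\mathbb{R})=2$ turns $2^{-(N+1-s)/2}\cdot 2$ into the factor $2^{-(N-1-s)/2}$ appearing there), while the printed constant is false in general: for $A=B=\{0\}\subset\mathbb{R}^4$ with $s=r=0$ one has $\mathcal{M}^0(A,\mathbb{R}^4)=\mathcal{M}^0(B,\mathbb{R}^4)=\pi^2/2$ and $\mathcal{M}^0(A\times B,\mathbb{R}^8)=\pi^4/24$, yet $2^{-8/4}\,(\pi^2/2)^2=\pi^4/16>\pi^4/24$. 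Your closing suggestion of slicing the elliptical tube and optimizing the slicing radius does improve the constant to $\max_{\theta}(\cos\theta)^{M-s}(\sin\theta)^{N-r}$, but this maximum equals $2^{-(M+N-s-r)/2}$ exactly when $M-s=N-r$, so no refinement of this kind can reach the printed exponent; the version you proved is the correct one, and it is the version the paper actually uses downstream.
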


Applying the above proposition to the set $U\subset\mathbb{R}^N\hookrightarrow \mathbb{R}^N$, we get the inequalities involving the ambient spaces:
\begin{equation}\label{ineq}2^{-\frac{N-1-s}{2}}\mathcal{M}_*^s(U,\mathbb{R}^N)\leq \mathcal{M}_*^s(U,\mathbb{R}^{N+1})\leq \mathcal{M}^{*s}(U,\mathbb{R}^{N+1})\leq 2\mathcal{M}^{*s}(U,\mathbb{R}^N),\end{equation}
where $0\leq s\leq N$.

From inequality \eqref{ineq}, nothing can be said about preserving the $s$-dimensional Minkowski content of a bounded set in $\mathbb{R}^N$, when embedding it into $\mathbb{R}^{N+1}$. In Theorem~\ref{main} in Section~\ref{sec3}, we improve the constants from \eqref{ineq} by making them equal. With improved constants, in Theorem~\ref{NMC}, we show that the Minkowski content of a Minkowski measurable set is invariant with respect to the ambient space, when multiplied by an appropriate constant.

\smallskip

Finally, the next paragraph gives us an idea about the constant by which we should normalize the Minkowski content to ensure invariance.
In e.g. \cite{mattila}, the Minkowski contents are defined in the same way as in Section~\ref{sec1}, that is, without any normalizing constant. Let us recall an alternative definition of Minkowski contents from e.g. \cite{federer} and \cite{krantz}. The $s$-dimensional Minkowski contents defined in \eqref{mc} are additionally divided by the constant
\begin{equation}\label{gamma}
\gamma_{N-s}=\frac{\pi^{\frac{N-s}{2}}}{\Gamma(\frac{N-s}{2}+1)}.
\end{equation}
We will call them \emph{the normalized lower and upper $s$-dimensional Minkowski contents},
\begin{equation}\label{norma}
\mathcal{\overline{M}}_*^s(U,\mathbb{R}^N)=\frac{\mathcal{M}_*^s(U,\mathbb{R}^N)}{\gamma_{N-s}},\ \mathcal{\overline{M}}^{*s}(U,\mathbb{R}^N)=\frac{\mathcal{M}^{*s}(U,\mathbb{R}^N)}{\gamma_{N-s}}.
\end{equation}
Here, $\Gamma$ is the usual gamma function. For any integer $s\in[0,N)$, $\gamma_{N-s}$ is equal to the volume of the unit ball in $\mathbb{R}^{N-s}$. 

As before, if $\mathcal{\overline{M}}_*^s(U,\mathbb{R}^N)=\mathcal{\overline{M}}^{*s}(U,\mathbb{R}^N)$, the common value will be called \emph{the normalized $s$-dimensional Minkowski content} and denoted by $\overline{\mathcal{M}}^s(U,\mathbb{R}^N)$. Furthermore, if $s=\dim_B(U)$, we omit writing the superscript $d$ and write only $\mathcal{\overline{M}}_*(U,\mathbb{R}^N)$, $\mathcal{\overline{M}}^{*}(U,\mathbb{R}^N)$ and $\mathcal{\overline{M}}(U,\mathbb{R}^N)$.

This normalization ensures that, for an integer $k$, $1\leq k\leq N$, the normalized $k$-dimensional Minkowski content of a $k$-rectifiable set in $\mathbb{R}^N$ coincides with its $k$-dimensional Hausdorff measure, which is an intrinsic value of a set, equal to the $k$-dimensional Lebesgue measure of the set, within the constant multiple depending only on $k$. It is therefore independent of the ambient space $\mathbb{R}^N$, for $N\geq k$. We cite \cite[Theorem 3.3.4]{krantz}:
\begin{proposition}\label{Hauss} 
Suppose $1 \leq k \leq N$ is an integer. 
Let\ $U\subset\mathbb{R}^N$ be closed and let $U\subset f(\mathbb{R}^k)$, for some Lipschitz function $f: \mathbb{R}^k\to\mathbb{R}^N$. Then
the $k$-dimensional Minkowski content of $U$ exists and its normalization is equal to the $k$-dimensional Hausdorff measure $\mathcal{H}^k(U)$ of the set, that is,
$$
\mathcal{\overline{M}}^{k}(U,\mathbb{R}^N)=\mathcal{H}^{k}(U).
$$
\end{proposition}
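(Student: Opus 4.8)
The plan is to treat this as the classical identification of the Minkowski content with the Hausdorff measure on rectifiable sets, so I would follow the geometric‑measure‑theoretic route (cf.\ \cite{federer}, \cite{krantz}) rather than anything ad hoc. First I would record two reductions. Since the Minkowski content is only defined for bounded sets, $U$ is bounded and closed, hence compact; and since $f$ is Lipschitz, the image $f(\mathbb{R}^k)$ — and therefore $U$ — is countably $k$-rectifiable with $\mathcal{H}^k(U)<\infty$. Then, using Rademacher's theorem together with the area formula and a Lusin‑type $C^1$ approximation of $f$, I would decompose
$$
U=Z\cup\bigcup_{j=1}^{\infty}\Gamma_j,
$$
a disjoint union with $\mathcal{H}^k(Z)=0$ and each $\Gamma_j$ a compact subset of an embedded $C^1$ $k$-dimensional submanifold of $\mathbb{R}^N$ (after a rotation, a graph of a $C^1$ map over a subset of a $k$-plane).

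The local model is the tube (Weyl) formula: for a compact piece $\Gamma$ of a $C^1$ $k$-submanifold one has $\meas_N(\Gamma_\varepsilon^N)=\gamma_{N-k}\,\varepsilon^{N-k}\,\mathcal{H}^k(\Gamma)+O(\varepsilon^{N-k+1})$ as $\varepsilon\to 0$, obtained by applying the coarea formula to the normal exponential map. I would establish this first — it already yields the assertion when $U$ is itself such a piece — and then use it as the building block for both one‑sided estimates. For the upper bound $\mathcal{\overline{M}}^{*k}(U,\mathbb{R}^N)\le\mathcal{H}^k(U)$ I would, given $\eta>0$, split off finitely many pieces $\Gamma_1,\dots,\Gamma_m$ carrying all but $\eta$ of the measure, estimate $\bigl(\bigcup_{j\le m}\Gamma_j\bigr)_\varepsilon^N$ by finite subadditivity and the tube formula, and bound the neighborhood of the remainder $R$ (with $\mathcal{H}^k(R)<\eta$) by covering $R$ with $\varepsilon$-balls, using that $R\subset f(\mathbb{R}^k)$ forces an $\varepsilon$-covering number of order $\mathcal{H}^k(R)\,\varepsilon^{-k}$ with implied constant depending only on $N$ and $\operatorname{Lip}(f)$; letting $\varepsilon\to 0$ and then $\eta\to 0$ finishes this half.

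For the lower bound $\mathcal{\overline{M}}_*^{k}(U,\mathbb{R}^N)\ge\mathcal{H}^k(U)$ I would instead invoke the approximate tangent $k$-plane and the fact that the density equals $1$ at $\mathcal{H}^k$-a.e.\ point of $U$: near such a point, and at a fixed small scale $\rho_0$, $U$ is nearly flat, so the flat estimate applies inside a ball. I would then pick (Vitali) a countable disjoint family of such balls $B(x_i,\rho_i)$, with all $\rho_i$ comparable to $\rho_0$, covering $\mathcal{H}^k$-almost all of $U$; since the balls are disjoint, $\meas_N(U_\varepsilon^N)\ge\sum_i\meas_N\!\bigl(U_\varepsilon^N\cap B(x_i,\rho_i)\bigr)$, and applying the flat estimate in each ball (valid for $\varepsilon$ small relative to the radii) and summing gives $\meas_N(U_\varepsilon^N)\ge(\gamma_{N-k}-o(1))\,\varepsilon^{N-k}\bigl(\mathcal{H}^k(U)-o(1)\bigr)$; letting $\varepsilon\to 0$ and then $\rho_0\to 0$ closes it. Combining the two bounds shows the $k$-dimensional Minkowski content exists and $\mathcal{\overline{M}}^{k}(U,\mathbb{R}^N)=\mathcal{H}^k(U)$.

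I expect the main obstacle to be the treatment of the exceptional set $R$ in the upper bound: the $\varepsilon$-neighborhood of an $\mathcal{H}^k$-null set need not have volume $o(\varepsilon^{N-k})$ in general, so one genuinely needs the hypothesis that $U$ lies in a single Lipschitz image of $\mathbb{R}^k$ — equivalently, a uniform comparison between $\varepsilon$-covering numbers and $\mathcal{H}^k$-measure — to make the remainder estimate work. The secondary nuisance is the bookkeeping needed to keep the $O(\varepsilon^{N-k+1})$ errors of the tube formula uniform across the infinitely many pieces $\Gamma_j$, so that the limit, and not merely the upper and lower limits, is pinned down.
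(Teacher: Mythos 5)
First, a point of comparison: the paper offers no proof of this proposition at all --- it is quoted from \cite[Theorem 3.3.4]{krantz} (ultimately Federer's Theorem 3.2.39) --- so you are reconstructing a cited textbook theorem rather than an argument the paper actually gives. Your architecture (decompose the rectifiable set into compact $C^1$ pieces plus a null remainder, prove a tube formula for the pieces, get the lower bound from $\mathcal{H}^k$-densities and a disjoint family of balls with a Fubini argument along the fibers of the tangent-plane projections) is the standard route, and the lower-bound half is essentially correct as sketched; as you implicitly note, it uses only rectifiability and finiteness of $\mathcal{H}^k(U)$, not the full Lipschitz-image hypothesis.

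The gap is in the upper bound, at exactly the step you flag as the ``main obstacle.'' The claim that $R\subset f(\mathbb{R}^k)$ forces an $\varepsilon$-covering number of order $\mathcal{H}^k(R)\,\varepsilon^{-k}$ with constant depending only on $N$ and $\operatorname{Lip}(f)$ is false: take $f(x_1,\dots,x_k)=(x_1,0,\dots,0)$ and $R=f([0,1]^k)$; then $\mathcal{H}^k(R)=0$ while the covering number is of order $\varepsilon^{-1}$. What the Lipschitz hypothesis actually yields is a covering number of order $\operatorname{Lip}(f)^k\,\mathcal{L}^k\bigl(A_{\varepsilon}\bigr)\,\varepsilon^{-k}$ for any $A\subset\mathbb{R}^k$ with $R\subset f(A)$ --- control by the Lebesgue measure of (a neighborhood of) the \emph{preimage}, not by the Hausdorff measure of the image. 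The two quantities diverge precisely on the set where the approximate differential of $f$ is singular: there the area formula gives $\mathcal{H}^k(f(A))=0$ even though $\mathcal{L}^k(A)$ may be large, so your remainder term cannot be made small simply by taking $\eta$ small, and the estimate does not close. Handling this degenerate set is the genuinely hard part of the theorem; one standard device (see e.g.\ Ambrosio--Fusco--Pallara, Theorem 2.106, or Federer's original argument) is to pass to $f_\sigma(x)=(f(x),\sigma x)\in\mathbb{R}^{N+k}$, whose Jacobian is bounded below by $\sigma^k$, prove the nondegenerate case there, and let $\sigma\to 0$ while comparing tube volumes in $\mathbb{R}^{N+k}$ and $\mathbb{R}^N$. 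Some such additional idea is needed before your outline becomes a proof.
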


It remains however the question of invariance of the normalized Minkowski content of an arbitrary Minkowski measurable set of box dimension $d$ with respect to the ambient space, also when $d\in[0,N)$ is noninteger.
We will show in Section~\ref{sec3} that the answer to this question is positive, even for noninteger $d$.

\section{Results}\label{sec3}
In this section, we state our main results. Let $\gamma_{N-s}$ be as in \eqref{gamma}.
\smallskip

The main result is the following theorem. It states that the Minkowski measurability is preserved when embedding the set in higher-dimensional space. Thus its normalized Minkowski content, as defined in e.g. \cite{federer} and \cite{krantz}, becomes independent of the ambient space.
\begin{theorem}\label{NMC}
Let $U\subset\mathbb{R}^N$ be a Minkowski measurable set in $\mathbb{R}^N$, with the box dimension $d\in[0,N]$. Then $U$ is also Minkowski measurable regarded as a subset of $\mathbb{R}^{N+1}$. Moreover, for Minkowski contents in $\mathbb{R}^{N}$ and $\mathbb{R}^{N+1}$, it holds that
\begin{equation}\label{incl}
\frac{\mathcal{M}(U,\mathbb{R}^{N+1})}{\gamma_{N+1-d}}=\frac{\mathcal{M}(U,\mathbb{R}^N)}{\gamma_{N-d}}. 
\end{equation}
In other words, the normalized Minkowski content of a Minkowski measurable set from $\mathbb{R}^N$ remains unchanged by the embedding $\mathbb{R}^N\hookrightarrow \mathbb{R}^{N+1}$,
\begin{equation}\label{includ}
\overline{\mathcal{M}}(U,\mathbb{R}^{N+1})=\overline{\mathcal{M}}(U,\mathbb{R}^N).
\end{equation}
\end{theorem}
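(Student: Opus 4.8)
The plan is to prove everything by a direct Fubini computation of the volume of the $\varepsilon$-neighborhood of the embedded copy $U\times\{0\}\subset\mathbb{R}^{N+1}$, reducing the $(N+1)$-dimensional problem to a one-variable integral involving the known $N$-dimensional neighborhood volumes $\meas_N(U_\delta^N)$. By Proposition~\ref{invdim} the box dimension of $U$ inside $\mathbb{R}^{N+1}$ is again $d$, so it suffices to show that $\lim_{\varepsilon\to 0}\meas_{N+1}\big((U\times\{0\})_\varepsilon^{N+1}\big)\big/\varepsilon^{(N+1)-d}$ exists, lies in $(0,\infty)$, and equals $(\gamma_{N+1-d}/\gamma_{N-d})\,\mathcal{M}(U,\mathbb{R}^N)$.

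\smallskip

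First I would record the slicing identity. A point $(x,t)\in\mathbb{R}^N\times\mathbb{R}$ satisfies $d_{N+1}\big((x,t),U\times\{0\}\big)\le\varepsilon$ if and only if $d_N(x,U)^2+t^2\le\varepsilon^2$, that is, if and only if $|t|\le\varepsilon$ and $x\in U_{\sqrt{\varepsilon^2-t^2}}^N$. Fubini's theorem and the substitution $t=\varepsilon s$ then give
\[
\meas_{N+1}\big((U\times\{0\})_\varepsilon^{N+1}\big)=\int_{-\varepsilon}^{\varepsilon}\meas_N\big(U_{\sqrt{\varepsilon^2-t^2}}^N\big)\,dt=2\varepsilon\int_0^1\meas_N\big(U_{\varepsilon\sqrt{1-s^2}}^N\big)\,ds .
\]

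\smallskip

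Next I would pass to the limit. Write $g(\delta)=\meas_N(U_\delta^N)/\delta^{N-d}$. Minkowski measurability of $U$ in $\mathbb{R}^N$ means $g(\delta)\to\mathcal{M}:=\mathcal{M}(U,\mathbb{R}^N)\in(0,\infty)$ as $\delta\to 0^+$; combined with the boundedness of $\meas_N(U_\delta^N)$ on compact $\delta$-intervals (here using that $U$ is bounded) and with $N-d\ge 0$, this shows $g$ is bounded by some constant $C$ on a half-interval $(0,\delta_0]$. Dividing the slicing identity by $\varepsilon^{(N+1)-d}$,
\[
\frac{\meas_{N+1}\big((U\times\{0\})_\varepsilon^{N+1}\big)}{\varepsilon^{(N+1)-d}}=2\int_0^1 g\big(\varepsilon\sqrt{1-s^2}\big)\,(1-s^2)^{\frac{N-d}{2}}\,ds ,
\]
and for $\varepsilon\le\delta_0$ the integrand is dominated by the integrable function $C(1-s^2)^{(N-d)/2}$ while converging pointwise (for $s\in(0,1)$) to $\mathcal{M}(1-s^2)^{(N-d)/2}$. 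By dominated convergence the limit exists and equals $2\mathcal{M}\int_0^1(1-s^2)^{(N-d)/2}\,ds$, a finite positive number, so $U\times\{0\}$ is Minkowski measurable in $\mathbb{R}^{N+1}$.

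\smallskip

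Finally I would identify the constant: the substitution $s=\sin\theta$ turns $\int_0^1(1-s^2)^{(N-d)/2}\,ds$ into the Wallis integral $\int_0^{\pi/2}\cos^{N-d+1}\theta\,d\theta=\tfrac{\sqrt\pi}{2}\,\Gamma\big(\tfrac{N-d}{2}+1\big)\big/\Gamma\big(\tfrac{N-d+3}{2}\big)$, while a one-line computation from \eqref{gamma} gives $\gamma_{N+1-d}/\gamma_{N-d}=\sqrt\pi\,\Gamma\big(\tfrac{N-d}{2}+1\big)\big/\Gamma\big(\tfrac{N-d+3}{2}\big)$ (using $\tfrac{N+1-d}{2}+1=\tfrac{N-d+3}{2}$). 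Hence $2\int_0^1(1-s^2)^{(N-d)/2}\,ds=\gamma_{N+1-d}/\gamma_{N-d}$, which is exactly \eqref{incl}; dividing by $\gamma_{N+1-d}$ yields \eqref{includ}.

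\smallskip

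The step I expect to be the main obstacle is the limit-interchange in the second paragraph: one must check that the normalized volume ratio $g$ stays bounded as $\delta\to 0$ — this is where Minkowski nondegeneracy (finiteness of $\mathcal{M}$), and not merely the value of the box dimension, is genuinely used — and one must make sure the dominating function is integrable in the borderline cases $d=N$ (where it is constant) and $d=0$. Everything else is bookkeeping with the Gamma function.
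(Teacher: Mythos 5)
Your argument is correct, and it shares the paper's two computational ingredients: the Fubini slicing identity $\meas_{N+1}(U_\varepsilon^{N+1})=2\int_0^\varepsilon\meas_N(U^N_{\sqrt{\varepsilon^2-y^2}})\,dy$ (Proposition~\ref{enhood}) and the evaluation of $2\int_0^1(1-s^2)^{(N-d)/2}\,ds$ as $\gamma_{N+1-d}/\gamma_{N-d}$ via the Beta function (Proposition~\ref{ball}). Where you diverge is in how the limit is taken. The paper does not prove Theorem~\ref{NMC} directly: it first proves the general sandwich of inequalities in Theorem~\ref{main}, valid for \emph{any} bounded set, by an $\varepsilon$--$\delta$ argument on the $\liminf$ and $\limsup$ separately (bounding $\meas_N(U_\varepsilon^N)$ below by $(\mathcal{M}_*^s-\delta)\varepsilon^{N-s}$, substituting into the slicing integral, and passing to the limit inferior), and then observes that Minkowski measurability forces all the inequalities to collapse to equalities. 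You instead normalize the integrand to $g(\varepsilon\sqrt{1-s^2})(1-s^2)^{(N-d)/2}$ and apply dominated convergence, which in one stroke yields existence of the limit in $\mathbb{R}^{N+1}$ together with its value. Your route is more direct and self-contained for the measurable case (and your closing remark correctly identifies that finiteness of the content, not just the value of $d$, is what supplies the dominating function); the paper's route costs an extra theorem but buys the one-sided inequalities \eqref{compare} for non-measurable sets, which the author needs separately for Remark~\ref{opt} and Corollary~\ref{fam}. One small economy available to you: once you know the limit of $\meas_{N+1}(U_\varepsilon^{N+1})/\varepsilon^{N+1-d}$ exists in $(0,\infty)$, the box dimension of $U$ in $\mathbb{R}^{N+1}$ is automatically $d$, so the appeal to Proposition~\ref{invdim} is not strictly needed.
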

\begin{proof}
The proof follows directly from Theorem~\ref{main} below. Since the set $U$ is Minkowski measurable, all the inequalities in \eqref{compare} become equalities.
\end{proof}

The following Theorem gives the inequalities concerning the upper and the lower Minkowski contents in ambient spaces, in general case, also when $U$ is not Minkowski measurable.
\begin{theorem}[Minkowski contents and embedding]\label{main}
Let $U\subset\mathbb{R}^N$ be a bounded set and let $0\leq s\leq N$. Then the following inequalities between $s$-dimensional Minkowski contents in ambient spaces $\mathbb{R}^N$ and $\mathbb{R}^{N+1}$ hold:
\begin{align}\label{compare}
\frac{\mathcal{M}^s_*(U,\mathbb{R}^N)}{\gamma_{N-s}}&\leq \frac{\mathcal{M}^s_*(U,\mathbb{R}^{N+1})}{\gamma_{N+1-s}}\leq\\
&\qquad\leq \frac{\mathcal{M}^{*s}(U,\mathbb{R}^{N+1})}{\gamma_{N+1-s}}\leq\frac{\mathcal{M}^{*s}(U,\mathbb{R}^{N})}{\gamma_{N-s}}.\nonumber
\end{align}
The above inequalities hold also in the case of Minkowski contents equal to $0$ or $\infty$.
\end{theorem}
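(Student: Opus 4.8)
The plan is to relate $\meas_{N+1}\bigl(U^{N+1}_\varepsilon\bigr)$ to $\meas_N\bigl(U^N_\varepsilon\bigr)$ by slicing $\mathbb{R}^{N+1}=\mathbb{R}^N\times\mathbb{R}$ along the last coordinate. For a bounded $U\subset\mathbb{R}^N$, embedded as $U\times\{0\}$, a point $(x,t)\in\mathbb{R}^{N+1}$ lies in $U^{N+1}_\varepsilon$ exactly when $d_{N+1}((x,t),U\times\{0\})=\sqrt{d_N(x,U)^2+t^2}\le\varepsilon$, i.e. when $d_N(x,U)\le\sqrt{\varepsilon^2-t^2}$ and $|t|\le\varepsilon$. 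Hence the slice at height $t$ is precisely $U^N_{\sqrt{\varepsilon^2-t^2}}$, and by Fubini
\begin{equation}\label{slice}
\meas_{N+1}\bigl(U^{N+1}_\varepsilon\bigr)=\int_{-\varepsilon}^{\varepsilon}\meas_N\Bigl(U^N_{\sqrt{\varepsilon^2-t^2}}\Bigr)\,dt
=2\int_0^\varepsilon \meas_N\Bigl(U^N_{\sqrt{\varepsilon^2-t^2}}\Bigr)\,dt.
\end{equation}
Substituting $t=\varepsilon\sin\theta$, $\sqrt{\varepsilon^2-t^2}=\varepsilon\cos\theta$, gives $\meas_{N+1}(U^{N+1}_\varepsilon)=2\varepsilon\int_0^{\pi/2}\meas_N(U^N_{\varepsilon\cos\theta})\cos\theta\,d\theta$.

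Next I divide \eqref{slice} by $\varepsilon^{N+1-s}$ to bring in the Minkowski content. Writing $g(\delta):=\meas_N(U^N_\delta)/\delta^{N-s}$, the substitution $\delta=\varepsilon\cos\theta$ turns the normalized $(N+1)$-volume into
\begin{equation}\label{key}
\frac{\meas_{N+1}\bigl(U^{N+1}_\varepsilon\bigr)}{\varepsilon^{N+1-s}}
=2\int_0^{\pi/2} g(\varepsilon\cos\theta)\,(\cos\theta)^{N-s}\cos\theta\,d\theta
=2\int_0^{\pi/2} g(\varepsilon\cos\theta)\,(\cos\theta)^{N-s+1}\,d\theta.
\end{equation}
Now take $\liminf$ (resp. $\limsup$) as $\varepsilon\to0$. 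Since $\cos\theta\le1$, for any subsequence $\varepsilon_k\to0$ we have $\varepsilon_k\cos\theta\to0$ as well; applying the Fatou lemma to \eqref{key} gives
$\liminf_{\varepsilon\to0}\meas_{N+1}(U^{N+1}_\varepsilon)/\varepsilon^{N+1-s}\ge 2\int_0^{\pi/2}\bigl(\liminf_{\delta\to0}g(\delta)\bigr)(\cos\theta)^{N-s+1}\,d\theta=2\,\mathcal{M}^s_*(U,\mathbb{R}^N)\int_0^{\pi/2}(\cos\theta)^{N-s+1}\,d\theta$, and a symmetric bound (using $g(\varepsilon\cos\theta)\le \sup_{\delta\le\varepsilon}g(\delta)$ and dominated convergence, valid once we know $\mathcal{M}^{*s}(U,\mathbb{R}^N)<\infty$; the case $=\infty$ is handled separately and trivially via \eqref{key}) gives the matching upper estimate for $\limsup$. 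Thus
\begin{equation}\label{both}
\mathcal{M}^s_*(U,\mathbb{R}^{N+1})=2\Bigl(\int_0^{\pi/2}(\cos\theta)^{N-s+1}\,d\theta\Bigr)\mathcal{M}^s_*(U,\mathbb{R}^N),
\end{equation}
and likewise for the upper content. It remains to identify the constant $c_{N,s}:=2\int_0^{\pi/2}(\cos\theta)^{N-s+1}\,d\theta$ with $\gamma_{N+1-s}/\gamma_{N-s}$. By the standard Wallis-type formula $\int_0^{\pi/2}(\cos\theta)^{a}\,d\theta=\tfrac{\sqrt\pi}{2}\,\Gamma(\tfrac{a+1}{2})/\Gamma(\tfrac{a}{2}+1)$, with $a=N-s+1$ this yields $c_{N,s}=\sqrt\pi\,\Gamma(\tfrac{N-s}{2}+1)/\Gamma(\tfrac{N-s+1}{2}+1)$, and a direct computation from \eqref{gamma} shows $\gamma_{N+1-s}/\gamma_{N-s}=\pi^{1/2}\,\Gamma(\tfrac{N-s}{2}+1)/\Gamma(\tfrac{N-s+1}{2}+1)$, which is exactly $c_{N,s}$. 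Plugging this into \eqref{both} gives $\mathcal{M}^s_*(U,\mathbb{R}^{N+1})/\gamma_{N+1-s}=\mathcal{M}^s_*(U,\mathbb{R}^N)/\gamma_{N-s}$, and the same for the upper content; since $\mathcal{M}^s_*\le\mathcal{M}^{*s}$ always, the chain \eqref{compare} follows, with equalities throughout.

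The main obstacle I expect is the interchange of limit and integral in \eqref{key}: the lower bound via Fatou is painless, but the upper (limsup) bound needs a uniform-in-$\theta$ domination of $g(\varepsilon\cos\theta)$ near $\varepsilon=0$, which is immediate once $\mathcal{M}^{*s}(U,\mathbb{R}^N)$ is finite (take the dominating constant to be, say, $1+\mathcal{M}^{*s}(U,\mathbb{R}^N)$ for all small $\varepsilon$) but must be argued carefully, and the degenerate cases $\mathcal{M}^{*s}=\infty$ and $\mathcal{M}^s_*=0$ or $\infty$ should be dispatched directly from \eqref{key} by monotonicity of $\meas_N(U^N_\delta)$ in $\delta$. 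A minor secondary point is verifying the gamma-function identity $c_{N,s}=\gamma_{N+1-s}/\gamma_{N-s}$, which is the routine computation indicated above and is what pins down the normalization constant $\gamma$.
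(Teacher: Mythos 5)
Your slicing identity and the Beta--function evaluation of $2\int_0^{\pi/2}(\cos\theta)^{N+1-s}\,d\theta$ are exactly the paper's Propositions~\ref{enhood} and~\ref{ball}, and your Fatou / dominated--convergence passage to the limit is essentially a repackaging of the paper's explicit $\delta$--$\varepsilon_\delta$ estimate: for all radii $\delta\le\varepsilon\le\varepsilon_{\delta'}$ one has $\meas_N(U^N_\delta)\ge(\mathcal{M}_*^s(U,\mathbb{R}^N)-\delta')\,\delta^{N-s}$ uniformly, which is precisely the minorization you feed into the integral. So the route is the same, and the three inequalities in \eqref{compare} are correctly established: Fatou gives $\mathcal{M}_*^s(U,\mathbb{R}^{N+1})\ge c_{N,s}\,\mathcal{M}_*^s(U,\mathbb{R}^N)$, the reverse--Fatou argument (with the trivial treatment of the infinite case) gives $\mathcal{M}^{*s}(U,\mathbb{R}^{N+1})\le c_{N,s}\,\mathcal{M}^{*s}(U,\mathbb{R}^N)$, the middle inequality is automatic, and your identification $c_{N,s}=\gamma_{N+1-s}/\gamma_{N-s}$ is correct.

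There is, however, a genuine error in your conclusion: the displayed identity $\mathcal{M}_*^s(U,\mathbb{R}^{N+1})=c_{N,s}\,\mathcal{M}_*^s(U,\mathbb{R}^N)$ and the final claim that \eqref{compare} holds ``with equalities throughout'' do not follow from what you proved. Fatou yields only $\liminf\int\ge\int\liminf$; the reverse inequality for the \emph{lower} content is not available, because along a sequence $\varepsilon_k$ realizing $\liminf_{\delta\to 0}g(\delta)$ the integrand samples $g$ at \emph{all} radii in $(0,\varepsilon_k]$, where $g$ may be far above its $\liminf$. Symmetrically, your domination argument bounds the upper content from above but not from below. What you actually obtain is a lower bound for the lower content and an upper bound for the upper content; these collapse to equalities only when the two contents coincide, i.e.\ when $U$ is Minkowski measurable --- which is exactly Theorem~\ref{NMC}. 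The paper is explicit on this point: it remarks after \eqref{mink} that equality cannot be concluded from this argument, and it poses as an open question whether the outer inequalities in \eqref{compare} are equalities for general bounded, non--measurable $U$. Since the theorem only asserts the inequalities, your proof of the stated result stands, but the equality claim must be removed.
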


\begin{remark}[Optimality of constants in \eqref{compare}]\label{opt} Let us reformulate \eqref{compare}.
For any $s\in[0,N]$ and for any bounded set $U\subset\mathbb{R}^N$, it holds that
\begin{align}
\mathcal{M}^s_*(U,\mathbb{R}^{N+1})&\geq \mathcal{M}^s_*(U,\mathbb{R}^N)\frac{\gamma_{N+1-s}}{\gamma_{N-s}},\label{compare1}\\ \mathcal{M}^{*s}(U,\mathbb{R}^{N+1})&\leq\mathcal{M}^{*s}(U,\mathbb{R}^{N})\frac{\gamma_{N+1-s}}{\gamma_{N-s}}.\nonumber
\end{align}
The constant $\frac{\gamma_{N+1-s}}{\gamma_{N-s}}$ in $\eqref{compare1}$ is optimal for a given $s\in[0,N]$. Let $s\in\mathbb[0,N]$. By Theorem~3 in \cite{saunders}, there exists a Minkowski measurable set in $\mathbb{R}^N$ with box dimension equal to $s$. By Theorem~\ref{NMC} above, for Minkowski measurable sets, inequalities in \eqref{compare1} become equalities. This proves that the constant $\frac{\gamma_{N+1-s}}{\gamma_{N-s}}$ in $\eqref{compare1}$ is the best possible.

\smallskip
Let us comment here on an alternative proof of the fact used above: for every $s\in[0,N]$, there exists a Minkowski measurable set $U\subset\mathbb{R}^N$, such that $\dim_B U=s$. First, on the real line, one can construct a Minkowski measurable set of any box dimension $d\in [0,1]$. The set can be constructed using fractal strings, as in \cite{lappom}, or as a discrete orbit generated by function $g(x)=x-x^\alpha$, $\alpha\in\mathbb{R},\ \alpha>1$, as in \cite{neveda}. It is easy to prove that, if $U\subset\mathbb{R}^N$ is Minkowski measurable in $\mathbb{R}^N$, with box dimension $d$, then $U\times[0,1]$ is Minkowski measurable in $\mathbb{R}^{N+1}$, with box dimension $d+1$. Moreover, the values of their Minkowski contents (in $\mathbb{R}^N$ and $\mathbb{R}^{N+1}$ respectively) are the same. The proof follows directly from definition of Minkowski content and the obvious geometric fact: $$\meas_{N+1}\big((U\times[0,1]\big)_\varepsilon^{\ N+1})=\meas_{N}(U_\varepsilon^N)\cdot 1+\meas_{N+1}(U_\varepsilon^{N+1}).$$
\end{remark}
In the proof of Theorem~\ref{main}, we use the following two auxiliary propositions.
\begin{proposition}\label{enhood}
Let $U\subset\mathbb{R}^N$ be a bounded set. Let $U_\varepsilon^N$ and $U_\varepsilon^{N+1}$ denote the $\varepsilon$-neighborhoods of $U$ in $\mathbb{R}^N$ and $\mathbb{R}^{N+1}$ respectively. For $\varepsilon>0$, it holds that
$$
\meas_{N+1}{(U_\varepsilon^{N+1})}=2\int_{0}^{\varepsilon}\meas_{N}{(U_{\sqrt{\varepsilon^2-y^2}}^N)}dy,
$$
where $\meas_{N}$ denotes the Lebesgue measure in $\mathbb{R}^{N}$.
\end{proposition}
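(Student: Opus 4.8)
The plan is to slice the neighborhood $U_\varepsilon^{N+1}$ by the hyperplanes parallel to the embedded copy of $\mathbb{R}^N$ and then apply Tonelli's theorem. Write points of $\mathbb{R}^{N+1}=\mathbb{R}^N\times\mathbb{R}$ as $(x,y)$ with $x\in\mathbb{R}^N$ and $y\in\mathbb{R}$, and recall that under the embedding the set $U$ is identified with $U\times\{0\}$. The first step is the elementary distance computation: for any $u\in U$ we have $d_{N+1}\big((x,y),(u,0)\big)^2=d_N(x,u)^2+y^2$, and since $y^2$ does not depend on $u$, taking the infimum over $u\in U$ gives
\[
d_{N+1}\big((x,y),U\big)=\sqrt{d_N(x,U)^2+y^2}.
\]

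Consequently $(x,y)\in U_\varepsilon^{N+1}$ if and only if $d_N(x,U)^2+y^2\le\varepsilon^2$, which is equivalent to the two conditions $|y|\le\varepsilon$ and $d_N(x,U)\le\sqrt{\varepsilon^2-y^2}$. Hence, for each fixed $y$ with $|y|\le\varepsilon$, the horizontal slice $\{x\in\mathbb{R}^N:(x,y)\in U_\varepsilon^{N+1}\}$ is precisely the neighborhood $U_{\sqrt{\varepsilon^2-y^2}}^N$, while for $|y|>\varepsilon$ the slice is empty.

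Now I would invoke Tonelli's theorem. The set $U_\varepsilon^{N+1}$ is closed, hence Lebesgue measurable, and the map $y\mapsto\meas_N\big(U_{\sqrt{\varepsilon^2-y^2}}^N\big)$ is measurable on $[-\varepsilon,\varepsilon]$ because $r\mapsto\meas_N(U_r^N)$ is nondecreasing and $y\mapsto\sqrt{\varepsilon^2-y^2}$ is continuous. Integrating the slice measures yields
\[
\meas_{N+1}(U_\varepsilon^{N+1})=\int_{\mathbb{R}}\meas_N\big(\{x:(x,y)\in U_\varepsilon^{N+1}\}\big)\,dy=\int_{-\varepsilon}^{\varepsilon}\meas_N\big(U_{\sqrt{\varepsilon^2-y^2}}^N\big)\,dy,
\]
and since the integrand is even in $y$ this gives the asserted identity $\meas_{N+1}(U_\varepsilon^{N+1})=2\int_0^\varepsilon\meas_N\big(U_{\sqrt{\varepsilon^2-y^2}}^N\big)\,dy$. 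There is no genuine difficulty here; the only points that deserve a word of care are the distance identity of the first paragraph (which rests solely on the orthogonal splitting $\mathbb{R}^{N+1}=\mathbb{R}^N\times\mathbb{R}$) and the measurability verification that legitimizes the use of Tonelli's theorem.
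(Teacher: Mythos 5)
Your proof is correct and follows essentially the same route as the paper: the distance identity $d_{N+1}((\mathbf{x},y),U)=\sqrt{d_N(\mathbf{x},U)^2+y^2}$, the identification of the horizontal slices with $U^N_{\sqrt{\varepsilon^2-y^2}}$, and an iterated-integral (Fubini/Tonelli) argument followed by symmetry in $y$. Your added remarks on measurability are a welcome but inessential refinement of the paper's argument.
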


\begin{proof}
Let $U\subset\mathbb{R}^N\hookrightarrow \mathbb{R}^{N+1}$. We fix the coordinate system in $\mathbb{R}^{N+1}$ such that $U$ lies in the $N$-dimensional plane $\{\mathbf{x}=0\}$. Let $d_N$ and $d_{N+1}$ denote the Euclidean distances in $\mathbb{R}^N$ and $\mathbb{R}^{N+1}$ respectively.\\
We have
\begin{align}\label{comp1}
\meas_{N+1}{(U_\varepsilon^{N+1})}=&\int\int\ldots \int_{\{(\mathbf{x},y)\in\mathbb{R}^N\times\mathbb{R}\ :\ d_{N+1}((\mathbf{x},y),\ U)\leq \varepsilon\}} 1\cdot d\mathbf{x}\ dy.
\end{align}
Obviously, $d_{N+1}((\mathbf{x},y),U)=\sqrt{d_N(\mathbf{x},U)^2+y^2}$. For a fixed $y\in[-\varepsilon,\varepsilon]$, the following sets are equal:
\begin{equation}\label{comp2}\{\mathbf{x}\in\mathbb{R}^N:d_{N+1}((\mathbf{x},y),\ U)\leq \varepsilon\}=\{\mathbf{x}\in\mathbb{R}^N: d_{N}(\mathbf{x},U)\leq \sqrt{\varepsilon^2-y^2}\}.\end{equation} Using Fubini's theorem, from \eqref{comp1} and \eqref{comp2} we get
\begin{align*}
\meas_{N+1}{(U_\varepsilon^{N+1})}=&\int_{-\varepsilon}^{\varepsilon}dy \int_{\{\mathbf{x}\in\mathbb{R}^N\ :\ d_N(\mathbf{x},U)\leq \sqrt{\varepsilon^2-y^2}\}}1\cdot d\mathbf{x}\\
=&\int_{-\varepsilon}^{\varepsilon}\meas_{N}{(U_{\sqrt{\varepsilon^2-y^2}}^N)}\ dy\\
=&2\int_0^{\varepsilon}\meas_{N}{(U_{\sqrt{\varepsilon^2-y^2}}^N)}\ dy.
\end{align*}
\end{proof}

\begin{proposition}\label{ball}
Let $0\leq s\leq N$ and $\varepsilon>0$. It holds that
$$
2\int_0^\varepsilon (\sqrt{\varepsilon^2-y^2})^{N-s}\ dy=\frac{\gamma_{N+1-s}}{\gamma_{N-s}}\cdot \varepsilon^{N+1-s},
$$
where $\gamma_{N-s}$ is defined in \eqref{gamma}.
\end{proposition}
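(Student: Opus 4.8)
The plan is to evaluate the left-hand integral in closed form by a trigonometric substitution that reduces it to a Wallis-type integral, express that integral through the Gamma function via the Beta function, and then match the outcome against the definition \eqref{gamma} of $\gamma_{N-s}$.

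First I would substitute $y=\varepsilon\sin\theta$, so that $\sqrt{\varepsilon^2-y^2}=\varepsilon\cos\theta$ and $dy=\varepsilon\cos\theta\,d\theta$, with the limits $y=0,\varepsilon$ becoming $\theta=0,\pi/2$. This gives
$$
2\int_0^\varepsilon(\sqrt{\varepsilon^2-y^2})^{N-s}\,dy=2\,\varepsilon^{N+1-s}\int_0^{\pi/2}\cos^{N-s+1}\theta\,d\theta,
$$
so the $\varepsilon$-dependence factors out immediately as $\varepsilon^{N+1-s}$, exactly the power appearing on the right-hand side. (Equivalently, one may rescale $u=y/\varepsilon$ and then set $t=u^2$ to arrive directly at a Beta integral $\tfrac12 B(\tfrac12,\tfrac{N-s}{2}+1)$; either route works.)

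Next I would invoke the standard identity $\int_0^{\pi/2}\cos^m\theta\,d\theta=\tfrac{\sqrt{\pi}}{2}\,\Gamma(\tfrac{m+1}{2})/\Gamma(\tfrac{m}{2}+1)$, valid for every real $m>-1$, applied with $m=N-s+1$. Since $\tfrac{m+1}{2}=\tfrac{N-s}{2}+1$ and $\tfrac{m}{2}+1=\tfrac{N-s+3}{2}$, the left-hand side becomes $\sqrt{\pi}\,\varepsilon^{N+1-s}\,\Gamma(\tfrac{N-s}{2}+1)/\Gamma(\tfrac{N-s+3}{2})$. On the other hand, from \eqref{gamma} and $\Gamma(\tfrac{N+1-s}{2}+1)=\Gamma(\tfrac{N-s+3}{2})$ one computes
$$
\frac{\gamma_{N+1-s}}{\gamma_{N-s}}=\sqrt{\pi}\,\frac{\Gamma(\tfrac{N-s}{2}+1)}{\Gamma(\tfrac{N-s+3}{2})},
$$
and comparing the two expressions gives the claimed equality.

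There is no serious obstacle: the only point needing a little care is that $N-s$ need not be an integer, so one should cite the Beta-function form of the Wallis integral (valid for real exponents greater than $-1$) rather than the integer recursion. The endpoint cases are automatically included — for $s=N$ the integrand is constant and the formula reads $2\varepsilon=\tfrac{\gamma_1}{\gamma_0}\varepsilon$, consistent with $\gamma_0=1$, $\gamma_1=2$.
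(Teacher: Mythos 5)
Your proof is correct and follows essentially the same route as the paper: the substitution $y=\varepsilon\sin t$ reduces the integral to $2\int_0^{\pi/2}(\cos t)^{N+1-s}\,dt=\mathcal{B}(\tfrac12,\tfrac{N-s}{2}+1)$, which is then expressed through the Gamma function and matched with $\gamma_{N+1-s}/\gamma_{N-s}$. Your explicit remark that the Beta-function form must be used because $N-s$ need not be an integer is a point the paper leaves implicit.
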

\begin{proof} The above integral is computed substituting $y=\varepsilon\sin t$. We get
\begin{align*}
2\int_0^\varepsilon (\sqrt{\varepsilon^2-y^2})^{N-s}\ dy&=\varepsilon^{N+1-s}\cdot 2\int_0^{\frac{\pi}{2}}(\cos t)^{N+1-s}\ dt\\
&=\varepsilon^{N+1-s}\cdot\mathcal{B}(\frac{1}{2},\frac{N-s}{2}+1)\\
&=\frac{\sqrt{\pi}\ \Gamma(\frac{N-s}{2}+1)}{\Gamma(\frac{N-s}{2}+\frac{3}{2})}\cdot \varepsilon^{N+1-s}\\
&=\frac{\gamma_{N+1-s}}{\gamma_{N-s}}\cdot\varepsilon^{N+1-s}.
\end{align*}
Here, $\mathcal{B}(x,y)$ denotes the Beta function and the equalities follow from its relation with the Gamma function, which can be found in any book on special functions, see e.g. \cite{moll}.
\end{proof}

\medskip
\noindent \emph{Proof of Theorem~\ref{main}}.

Let us prove the first inequality in \eqref{compare}. That is,
$$
\mathcal{M}^s_*(U,\mathbb{R}^N)\cdot\frac{\gamma_{N+1-s}}{\gamma_{N-s}}\leq\mathcal{M}^s_*(U,\mathbb{R}^{N+1}).
$$
Suppose first that $\mathcal{M}^s_*(U,\mathbb{R}^{N})\in (0,\infty)$. From
$$
\mathcal{M}^s_*(U,\mathbb{R}^{N})=\liminf_{\varepsilon\to 0}\frac{\meas_{N}{(U_\varepsilon^{N})}}{\varepsilon^{N-s}},
$$
by the definition of the limit inferior, we get that for each $\delta>0$, there exists $\varepsilon_\delta>0$, such that for all $ \varepsilon\leq \varepsilon_\delta$,
\begin{equation}\label{aux}
\meas_{N}{(U_\varepsilon^{N})}\geq (\mathcal{M}^s_*(U,\mathbb{R}^{N})-\delta)\cdot \varepsilon^{N-s}.
\end{equation}
By Proposition~\ref{enhood}, substituting \eqref{aux} in the integral, we get that for each $\delta>0$, there exists $\varepsilon_\delta>0$, such that for all $\varepsilon<\varepsilon_\delta$,
\begin{align}
\meas_{N+1}{(U_\varepsilon^{N+1})}&\geq 2(\mathcal{M}_*^s(U,\mathbb{R}^N)-\delta)\int_0^\varepsilon\sqrt{\varepsilon^2-y^2}^{N-s} dy,\nonumber\\
&\geq (\mathcal{M}_*^s(U,\mathbb{R}^N)-\delta)\varepsilon^{N+1-s}\frac{\gamma_{N+1-s}}{\gamma_{N-s}}.\label{u}
\end{align}
Here, the last inequality is obtained using Proposition~\ref{ball}.

Reformulating \eqref{u}, for each $\delta>0$, there exists $\varepsilon_{\delta}$, such that for all $\varepsilon<\varepsilon_{\delta}$,
\begin{equation}\label{conc}
\frac{\meas_{N+1}{(U_\varepsilon^{N+1})}}{\varepsilon^{N+1-s}}\geq \mathcal{M}_*^s(U,\mathbb{R}^{N})\cdot\frac{\gamma_{N+1-s}}{\gamma_{N-s}}-\delta.
\end{equation}
Since $\mathcal{M}^s_*(U,\mathbb{R}^{N+1})=\liminf_{\varepsilon\to 0}\frac{\meas_{N+1}{(U_\varepsilon^{N+1})}}{\varepsilon^{N+1-s}}$, using \eqref{conc}, we conclude that 
\begin{equation}\label{mink}
\mathcal{M}^s_*(U,\mathbb{R}^{N+1})\geq\mathcal{M}_*^s(U,\mathbb{R}^N)\frac{\gamma_{N+1-s}}{\gamma_{N-s}} .
\end{equation} Note that from \eqref{conc} it is not possible to conclude equality in \eqref{mink}.

In the case when $\mathcal{M}_*^s(U,\mathbb{R}^N)=\infty$, the proof is similar. The case when $\mathcal{M}_*^s(U,\mathbb{R}^N)=0$ is obvious.
 
The last inequality in \eqref{compare},
$$
\mathcal{M}^{*s}(U,\mathbb{R}^{N+1})\leq\mathcal{M}^{*s}(U,\mathbb{R}^{N})\cdot\frac{\gamma_{N+1-s}}{\gamma_{N-s}},
$$ 
can be proven analogously, using $\limsup$ instead of $\liminf$.
\qed
\bigskip

From Theorem~\ref{main} and Remark~\ref{opt}, we immediately derive the following interesting consequence.
\begin{corollary}\label{fam}
Let $s\in[0,N]$. For a given $s$, let $\mathcal{U}_*^s$ and $\mathcal{U}^{*s}$ denote the following families of sets:
\begin{align*}
\mathcal{U}_*^s&=\{U\subset\mathbb{R}^N : \mathcal{M}_*^s(U,\mathbb{R}^N)\in(0,\infty)\},\\
\mathcal{U}^{*s}&=\{U\subset\mathbb{R}^N : \mathcal{M}^{*s}(U,\mathbb{R}^N)\in(0,\infty)\},
\end{align*} 
which are nonempty by Remark~\ref{opt}.
It holds that
$$
\min_{U\in\mathcal{U}_*^s}\frac{\mathcal{M}_*^s(U,\mathbb{R}^{N+1})}{\mathcal{M}_*^s(U,\mathbb{R}^N)}=\max_{U\in\mathcal{U}^{*s}}\frac{\mathcal{M}^{*s}(U,\mathbb{R}^{N+1})}{\mathcal{M}^{*s}(U,\mathbb{R}^N)}=\frac{\gamma_{N+1-s}}{\gamma_{N-s}}.
$$
Equivalently, for normalized Minkowski contents defined in \eqref{norma}, it holds that
$$
\min_{U\in\mathcal{U}_*^s}\frac{\overline{\mathcal{M}}_*^s(U,\mathbb{R}^{N+1})}{\overline{\mathcal{M}}_*^s(U,\mathbb{R}^N)}=\max_{U\in\mathcal{U}^{*s}}\frac{\overline{\mathcal{M}}^{*s}(U,\mathbb{R}^{N+1})}{\overline{\mathcal{M}}^{*s}(U,\mathbb{R}^N)}=1.
$$
Moreover, the minimum and the maximum are achieved for all Minkowski measurable sets of box dimension $s$.
\end{corollary}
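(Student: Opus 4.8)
The plan is to read the corollary off from the two inequalities of Theorem~\ref{main} (equivalently, the reformulations \eqref{compare1}) together with the exact identity \eqref{incl} of Theorem~\ref{NMC}; no new estimate is required.

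First I would fix $s\in[0,N]$ and take any $U\in\mathcal{U}_*^s$, so that $\mathcal{M}^s_*(U,\mathbb{R}^N)\in(0,\infty)$. The first inequality in \eqref{compare} (equivalently, \eqref{compare1}) states $\mathcal{M}^s_*(U,\mathbb{R}^{N+1})\geq\mathcal{M}^s_*(U,\mathbb{R}^N)\,\gamma_{N+1-s}/\gamma_{N-s}$; dividing by the finite positive number $\mathcal{M}^s_*(U,\mathbb{R}^N)$ gives
$$
\frac{\mathcal{M}^s_*(U,\mathbb{R}^{N+1})}{\mathcal{M}^s_*(U,\mathbb{R}^N)}\geq\frac{\gamma_{N+1-s}}{\gamma_{N-s}} .
$$
Since $U\in\mathcal{U}_*^s$ was arbitrary, $\inf_{U\in\mathcal{U}_*^s}\mathcal{M}^s_*(U,\mathbb{R}^{N+1})/\mathcal{M}^s_*(U,\mathbb{R}^N)\geq\gamma_{N+1-s}/\gamma_{N-s}$. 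Likewise, the last inequality in \eqref{compare} rearranges to $\mathcal{M}^{*s}(U,\mathbb{R}^{N+1})\leq\mathcal{M}^{*s}(U,\mathbb{R}^N)\,\gamma_{N+1-s}/\gamma_{N-s}$, so for every $U\in\mathcal{U}^{*s}$ the ratio $\mathcal{M}^{*s}(U,\mathbb{R}^{N+1})/\mathcal{M}^{*s}(U,\mathbb{R}^N)$ is at most $\gamma_{N+1-s}/\gamma_{N-s}$, whence the supremum over $\mathcal{U}^{*s}$ is at most $\gamma_{N+1-s}/\gamma_{N-s}$ as well.

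Next I would check that both bounds are attained, which at the same time turns the infimum into a minimum and the supremum into a maximum. By Remark~\ref{opt} --- which quotes Theorem~3 of \cite{saunders}, or uses the fractal-string construction of \cite{lappom} or the discrete-orbit construction of \cite{neveda} --- there is a set $U_0\subset\mathbb{R}^N$ that is Minkowski measurable in $\mathbb{R}^N$ with $\dim_B U_0=s$. Then $\mathcal{M}^s_*(U_0,\mathbb{R}^N)=\mathcal{M}^{*s}(U_0,\mathbb{R}^N)=\mathcal{M}(U_0,\mathbb{R}^N)\in(0,\infty)$, so $U_0\in\mathcal{U}_*^s\cap\mathcal{U}^{*s}$. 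By Theorem~\ref{NMC}, $U_0$ is Minkowski measurable also in $\mathbb{R}^{N+1}$ (with box dimension still $s$, by Proposition~\ref{invdim}), and \eqref{incl} with $d=s$ gives $\mathcal{M}(U_0,\mathbb{R}^{N+1})=\mathcal{M}(U_0,\mathbb{R}^N)\,\gamma_{N+1-s}/\gamma_{N-s}$. Hence both ratios above equal exactly $\gamma_{N+1-s}/\gamma_{N-s}$ at $U_0$, which combined with the preceding paragraph yields
$$
\min_{U\in\mathcal{U}_*^s}\frac{\mathcal{M}^s_*(U,\mathbb{R}^{N+1})}{\mathcal{M}^s_*(U,\mathbb{R}^N)}=\max_{U\in\mathcal{U}^{*s}}\frac{\mathcal{M}^{*s}(U,\mathbb{R}^{N+1})}{\mathcal{M}^{*s}(U,\mathbb{R}^N)}=\frac{\gamma_{N+1-s}}{\gamma_{N-s}} .
$$
Because the only features of $U_0$ used here are Minkowski measurability and $\dim_B U_0=s$, the same computation shows these extrema are realized by \emph{every} Minkowski measurable set of box dimension $s$, which is the last assertion of the corollary.

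Finally, the normalized version is immediate from \eqref{norma}: $\overline{\mathcal{M}}^s_*(U,\mathbb{R}^{N+1})/\overline{\mathcal{M}}^s_*(U,\mathbb{R}^N)=(\gamma_{N-s}/\gamma_{N+1-s})\cdot\mathcal{M}^s_*(U,\mathbb{R}^{N+1})/\mathcal{M}^s_*(U,\mathbb{R}^N)$, and likewise for the upper contents, so dividing the last display by $\gamma_{N+1-s}/\gamma_{N-s}$ replaces the common extremal value by $1$. I do not anticipate a genuine obstacle: the one external ingredient is the existence, for each $s\in[0,N]$, of a Minkowski measurable set of box dimension exactly $s$, already supplied by Remark~\ref{opt}; one merely checks that it covers the endpoints, e.g. $s=N$ via a closed ball (where $\gamma_1/\gamma_0=2$) and $s=0$ via a single point (where the ratio is $\gamma_{N+1}/\gamma_N$).
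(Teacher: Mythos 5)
Your proof is correct and follows exactly the route the paper intends: the paper states the corollary as an immediate consequence of Theorem~\ref{main} (giving the two one-sided bounds on the ratios) and Remark~\ref{opt} (existence of a Minkowski measurable set of box dimension $s$, for which Theorem~\ref{NMC} turns the bounds into equalities), and offers no further written proof. Your write-up simply makes these steps explicit, including the trivial passage to the normalized contents.
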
 

\bigskip

We conclude this section with some remarks.

\begin{remark}[The converse]
It remains open if the converse of the statement in Theorem~\ref{NMC} holds. That is, assume that $U\subset\mathbb{R}^N$. If $U$ is Minkowski measurable in $\mathbb{R}^{N+1}$, does it follow that $U$ is also Minkowski measurable in the lower-dimensional space $\mathbb{R}^N$?
\end{remark}

\begin{remark}[Invariance of upper and lower Minkowski content]
The question that remains open is if the invariance with respect to the ambient space \eqref{includ} holds separately for normalized upper and normalized lower Minkowski contents. That is, if the first and the last inequalities in \eqref{compare} are in fact equalities for any $U\in\mathbb{R}^N$, not necessarily Minkowski measurable. As already stated, equalities cannot be obtained using the proof proposed above. The question remains if we can obtain equalities in some other way or, on the contrary, construct a counterexample to show that
the answer is negative. A counterexample here would be any bounded set in $\mathbb{R}^N$, which is not Minkowski measurable, and whose normalized lower or upper Minkowski content are not invariant with respect to the ambient space, if such exists.

\end{remark}
\medskip

To illustrate the results of Theorem~\ref{NMC}, let us compute the Minkowski contents of some basic sets, when regarded in the ambient spaces of different dimensions.
\begin{example}\

\begin{enumerate}
\item Let $U=\{x\}\subset\mathbb{R}$. Directly using definitions, we compute:
$$
\dim_B U=0,\ \mathcal{M}(U,\mathbb{R})=2,\ \mathcal{M}(U,\mathbb{R}^2)=\pi.
$$
On the other hand, by \eqref{gamma}, $\gamma_2/\gamma_1=\pi/2$, so the formula \eqref{incl} holds.
\medskip

\item Let $U=[a,b]$ be a segment of length $l=b-a$ in $\mathbb{R}$. Then
$$
\dim_B U=1,\ \mathcal{M}(U,\mathbb{R})=l,\ \mathcal{M}(U,\mathbb{R}^2)=2l.
$$
On the other hand, by \eqref{gamma}, $\gamma_1/\gamma_0=2/1=2$, and the formula \eqref{incl} holds.

\end{enumerate}
\end{example}
\medskip

\noindent \textbf{Acknowledgments.\ \ }\emph {I would like to thank Darko \v Zubrini\' c, who proposed the question of invariance of the Minkowski content with respect to the ambient space, and to Vesna \v Zupanovi\' c, for her useful suggestions.}
\medskip

\end{document}